\documentclass[reqno,11pt]{amsart}
\pdfoutput=1

\usepackage[utf8]{inputenc}
\usepackage[italian,english]{babel}
\usepackage{amsfonts}
\usepackage{amssymb}
\usepackage{tikz}
\usepackage{svg}
\usepackage[right=3cm,bottom=2.5cm,footskip=30pt]{geometry}
\usepackage[autostyle]{csquotes}
\usepackage{color}
\usepackage{mathrsfs}
\usepackage{mathtools}
\usepackage{enumitem}
\usepackage{mathrsfs}
\usepackage{nicefrac}
\usepackage{multirow}
\usepackage{yfonts}

\usepackage{resmes} % simbolo per misure ristrette 

\usetikzlibrary{svg.path,arrows,calc,decorations.pathreplacing}
\tikzset{>=latex'}

\usepackage{todo}

\usepackage[toc]{appendix}
\usepackage{etoolbox}
\cslet{blx@noerroretextools}\empty
\usepackage[maxbibnames=9,giveninits=true,style=ext-alphabetic,backend=biber]{biblatex}
\usepackage{bibsettings}
\usepackage[colorlinks, citecolor=citegreen, linkcolor=refred,urlcolor=blue, unicode,psdextra,hypertexnames=false]{hyperref}
\usepackage{cleveref}
\crefname{enumi}{}{}
\crefname{enumii}{}{}
\expandafter\def\csname ver@etex.sty\endcsname{3000/12/31}

\usepackage{headings}
\usepackage{accents}
\usepackage{autonum}
\usepackage{gasymbols}

\usepackage[dvipsnames]{xcolor} % colori extra

\definecolor{citegreen}{rgb}{0,0.3,0}
\definecolor{refred}{rgb}{0.5,0,0}
\usepackage[equation=section]{theorems}

\author[M.~Fogagnolo]{Mattia Fogagnolo\orcid{0000-0002-5933-1344}}
\address{M.~Fogagnolo, Dipartimento di Matematica, Universit\`a di Padova, via Trieste 63, Padova, Italy}
\email{mattia.fogagnolo@unipd.it }

\author[G.~Gatti]{Giorgio Gatti\orcid{0009-0006-9006-3575}}
\address{G.~Gatti, Dipartimento di Matematica, Universit\`a di Padova, via Trieste 63, Padova, Italy}
\email{giorgio.gatti@math.unipd.it}

\author[A.~Pluda]{Alessandra Pluda\orcid{0000-0003-4714-4119}}
\address{A.~Pluda, Dipartimento di Matematica, Universit\`a di Pisa,
Largo Pontecorvo 5, Pisa, Italy}
\email{alessandra.pluda@unipi.it}

\title[Scalar curvature bounds for 3D continuous metrics through the IMCF]{Scalar curvature bounds for 3D continuous metrics through the Inverse Mean Curvature Flow}
\date{\today}

\begin{document}

\begin{abstract}
We propose a notion of scalar curvature lower bounds in a three-dimensional Riemannian manifold endowed with a $\CS^0$ metric based on the monotonicity of the Hawking mass along the inverse mean curvature flow. We present a stability theorem for continuous Riemannian metrics with nonnegative scalar curvature in such IMCF sense.
\end{abstract}  
\maketitle

\noindent MSC (2020): 
53E10, %FLOW RELATED TO MEAN CURVATURE
53C21, %Methods of global Riemannian geometry, including PDE methods; curvature restrictions [See also 58J60] Recent zbMATH articles in MSC 53C21  5374
53C23. %Global geometric and topological methods (`a la Gromov); differential geometric analysis on metric spaces

\smallskip

\noindent \underline{\smash{Keywords}}: scalar curvature, inverse mean curvature flow, Hawking mass, monotonicity formula.

\section{Introduction and main result}
Under the inverse mean curvature flow, a family of hypersurfaces evolves with a speed inversely proportional to their mean curvature. Its weak formulation involves the level sets of a locally Lipschitz solution $u$ to a boundary value problem. For a bounded domain $E$ within a complete noncompact Riemannian manifold $(M,g)$, $u$ must satisfy:
\begin{equation}\tag{IMCF}\label{eq:IMCF}
\begin{cases}
\div\left(\frac{\nabla u}{\abs{\nabla u}}\right)& =& \abs{\nabla u} &\text{on $ M\smallsetminus E $}\\
u&=&0&\text{on $\partial E$}\\
u&\to& +\infty &\text{as $ d(x,E)\to+\infty$}.
\end{cases}
\end{equation}
Solutions to \cref{eq:IMCF} are understood in the nonstandard variational sense introduced and comprehensively developed by Huisken and Ilmanen \cite{huisken_inversemeancurvatureflow_2001}. With the aim of proving the Riemannian Penrose inequality, they show that, in topologically simple $3$-manifolds with nonnegative scalar curvature, the Hawking mass 
\begin{equation}\label{eq:Hawking-mass}
    \ma_H(\Sigma) =\sqrt{\frac{\abs{\Sigma}}{16\pi}}\left(1-\frac{1}{16\pi}\int_{\Sigma}\H^2\dif\sigma\right)
\end{equation} 
is monotone nondecreasing. This note stems from observing that the converse holds:  
\begin{center}
\emph{in dimension $3$, the monotonicity of the Hawking mass implies nonnegative scalar curvature.} 
\end{center}
Consequently, nonnegative scalar curvature can be characterized purely via the IMCF - a formulation that makes sense even for merely continuous Riemannian metrics. 
In fact, one can consistently say (see \Cref{prop:characterization} and \Cref{def:imcf}) that a noncompact  manifold endowed with a continuous Riemannian metric admitting a weak solution to \eqref{eq:IMCF} has nonnegative scalar curvature (in the sense of IMCF) if, for all $t\geq 0$, the following holds
\begin{equation}
\label{eq:scalarimcfintro}
   32\pi\left(e^{t/2} -1\right) - \int_{\{u \leq t\}} e^{\frac{u}2} \abs{\nabla u}^3 \dif\mu \geq t\left(16\pi -\int_{\partial E} \H^2 \dif\sigma\right).
    \end{equation}
Since a $\CS^0$ metric is not enough to define the mean curvature classically, the mean curvature  $\H$ of the initial hypersurface $\partial E$ is understood in the variational sense of \Cref{def:continuous-phi-bubble}.

In this note, we present a definition of nonnegative scalar curvature in the IMCF sense and give the main arguments for its $\mathscr{C}^0$ stability.
\begin{theorem}
\label{thm:c0-stability}
   Let $M$ be a smooth, oriented,  noncompact $3$-manifold without boundary and with $H_2(M) = \{0\}$. Let $\{g_j\}_{j \in \mathbb{N}}$ be a sequence of complete $\CS^0$-Riemannian metrics on $M$ and assume that each  $(M, g_j)$ satisfies a  global Euclidean  isoperimetric inequality \cref{eq:isop} with a constant $C_{\mathrm{iso}}$ independent of $j$ and has nonnegative scalar curvature in the sense of IMCF. Let $g_j \to g$ as $j \to \infty$ in $\CS^0_{\mathrm{loc}}(M)$. Then, $(M, g)$ has nonnegative scalar curvature in the sense of IMCF as well.
\end{theorem}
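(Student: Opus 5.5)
We have not seen the precise Definition of nonnegative scalar curvature in the IMCF sense. We assume it asks that for every bounded domain $E$ (with suitable regularity, e.g.\ $\partial E$ a $\CS^0$ "$\varphi$-bubble'' carrying a variational mean curvature $\H$ as in \Cref{def:continuous-phi-bubble}) the weak IMCF $u$ issuing from $E$ satisfies \eqref{eq:scalarimcfintro} for all $t\ge 0$. The proof then passes \eqref{eq:scalarimcfintro} to the limit along the sequence $g_j$.

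We fix such an admissible $E$ for $g$. First we construct, for each $j$, admissible initial data for $g_j$, namely domains $E_j$ (possibly $E_j=E$) whose boundaries have variational mean curvatures $\H_j$ with
\[
\int_{\partial E_j}\H_j^2\dif\sigma_j \to \int_{\partial E}\H^2\dif\sigma .
\]
Since the mean curvature is defined variationally, we expect convergence of the first variation of perimeter under $\CS^0_{\mathrm{loc}}$ convergence, by comparing perimeters, which converge locally uniformly with the metrics. We then let $u_j$ be the weak IMCF for $(M,g_j)$ from $E_j$. Existence should follow from Huisken--Ilmanen's elliptic regularization adapted to $\CS^0$ metrics, using the uniform isoperimetric inequality \cref{eq:isop} to build uniform subsolutions/barriers. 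These barriers give $u_j\to+\infty$ at infinity uniformly in $j$, together with local gradient bounds. Here it is better to use the variational (minimizing-hull) formulation, since pointwise gradient estimates require curvature bounds. Uniform local $L^\infty$ and BV (or Lipschitz) bounds on $u_j$ then come from the isoperimetric inequality and the condition $H_2(M)=0$, the latter ensuring that level sets stay outward minimizing.

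Next we extract a limit. By compactness, $u_j\to u$ in $L^1_{\mathrm{loc}}$, and locally uniformly if Lipschitz bounds are available. We show that $u$ is a weak IMCF for $g$ from $E$ by passing to the limit in Huisken--Ilmanen's functional $J_u(F)=|\partial^* F|-\int_F|\nabla u|$. This is the standard compactness theorem for weak IMCF: the energies converge since $g_j\to g$ uniformly on compacts. Uniqueness of weak IMCF, for which the isoperimetric inequality gives the required properness, identifies the limit.

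Finally we pass \eqref{eq:scalarimcfintro} to the limit. The left side has the linear term $32\pi(e^{t/2}-1)$, and the right side converges by the first step. The delicate term is $\int_{\{u_j\le t\}}e^{u_j/2}|\nabla u_j|^3_{g_j}\dif\mu_j$, which appears with a minus sign. We therefore need
\[
\int_{\{u\le t\}} e^{u/2}|\nabla u|^3\dif\mu \le \liminf_j \int_{\{u_j\le t\}} e^{u_j/2}|\nabla u_j|^3_{g_j}\dif\mu_j ,
\]
which is lower semicontinuity of a convex functional of the gradient under weak convergence. This holds because $|\cdot|^3$ is convex, $g_j\to g$ uniformly, and $\{u\le t\}\subset\liminf\{u_j\le t+\varepsilon\}$. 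Letting $\varepsilon\to0$ and using the continuity in $t$ of both sides yields \eqref{eq:scalarimcfintro} for $g$.

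We expect the main obstacle to be the uniform local gradient or energy bounds for $u_j$ under only $\CS^0$ control. These are needed so that $\int |\nabla u_j|^3$ is finite and compactness holds. We would first try the co-area identity $\int_{\{u_j\le t\}}|\nabla u_j|^3 = \int_0^t\int_{\{u_j=s\}}|\nabla u_j|^2$, where $|\nabla u_j|=\H$ on level sets. This bounds the energy by the Willmore-type quantity itself, which \eqref{eq:scalarimcfintro} for $g_j$ controls uniformly through $\int_{\partial E_j}\H_j^2$. The inequality is thus self-improving, and this gives the needed bound.
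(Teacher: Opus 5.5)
The overall architecture of your proposal matches the paper's. You approximate the initial datum, run the weak IMCF for each $g_j$, pass to the unique weak IMCF of $E$, bound $\int_{\{u_j\le t\}}\abs{\nabla u_j}^3$ using \eqref{eq:nonnegscal-imcf} for $g_j$ itself, and conclude by lower semicontinuity. But your first step, where the real idea lies, fails as stated.

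\textbf{Main gap: the approximating initial data.} Neither $E_j=E$ nor ``convergence of the first variation of perimeter under $\CS^0_{\mathrm{loc}}$ convergence'' works. Perimeters converge when the metrics converge uniformly, but first variations involve derivatives of the metric, and $\CS^0$ convergence does not control those. For instance, take $g_j=e^{2f_j}\delta$ with $f_j=j^{-1}\sin(j^2x_1)$. A fixed round sphere then has $g_j$-mean curvature $e^{-f_j}(H_\delta+2\partial_\nu f_j)$, whose $L^2$ norm blows up. For merely continuous $g_j$, a fixed $E$ need not have any continuous variational mean curvature at all.

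The missing idea is to keep the prescribed function fixed and let the set move:
\begin{itemize}
\item Modify $\varphi$ away from $\partial E$ so that $E$ is the \emph{unique} minimizer of $\abs{\partial F}-\int_F\varphi\dif\mu$ among $U_1\subset F\subset U_2$.
\item Let $E_j$ minimize the same functional with respect to $g_j$ in the same class.
\item Uniform convergence of the metrics on the bounded set $U_2$, together with uniqueness, gives $E_j\to E$ as sets of finite perimeter with converging perimeters.
\item Density estimates upgrade this to Hausdorff convergence. Hence $U_1\Subset E_j\Subset U_2$, and $E_j$ genuinely has variational mean curvature $\varphi$ with respect to $g_j$.
\item The weak$^*$ convergence $\Hff^2_j\resmes\partial E_j\to\Hff^2\resmes\partial E$, together with the continuity of $\varphi$, yields $\int_{\partial E_j}\varphi^2\dif\Hff^2_j\to\int_{\partial E}\varphi^2\dif\Hff^2$.
\end{itemize}
You must also check that $\partial E_j$ is connected, because the definition only applies to sets with connected boundary. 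Geroch monotonicity needs connected level sets, and this, not outward minimization, is the role of $H_2(M)=\{0\}$. The paper obtains connectedness from Reifenberg-type $\CS^{0,\alpha}$ estimates for quasiminimizers. These are stable under $\CS^0$ limits of the metric and give $\partial E_j\cong\partial E$ for large $j$.

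\textbf{Secondary gap: uniform local $L^\infty$ bounds on $u_j$.} These do not come from the isoperimetric inequality and $H_2(M)=\{0\}$. The isoperimetric inequality only yields uniform properness at infinity. Lipschitz bounds and elliptic regularization are unavailable for $\CS^0$ metrics. The local upper bound, which prevents $\{u_j<t\}$ from collapsing onto the hull of $E_j$, comes from the Harnack-plus-barrier argument in the proof of \Cref{thm:imcfbv}, and that argument is uniform in $j$.

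This bound is also what turns your correct observation (that \eqref{eq:nonnegscal-imcf} bounds $\int_{\{u_j\le t\}}\abs{\nabla u_j}^3$ uniformly) into a uniform $L^3$ bound on compact sets. That in turn gives two things you need:
\begin{itemize}
\item $u\in W^{1,3}_{\mathrm{loc}}(M)$, which the definition requires and which you never verify;
\item the weak convergence of gradients used in your lower semicontinuity step.
\end{itemize}
Finally, with only $BV$/$L^p$ convergence, your inclusion $\{u\le t\}\subset\liminf_j\{u_j\le t+\varepsilon\}$ holds only almost everywhere, so an Egorov-type argument is needed there.
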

 For sequences of smooth metrics, a $\mathscr{C}^0$-stability result was first established using entirely different techniques by Gromov \cite{gromov-billiards} (see also \cite{chaoli-polyhedron}), and refined through a completely different Ricci flow argument by Bamler \cite{bamler}. Burkhardt-Guim \cite{Paula1, Paula2} devised  a Ricci-flow related notion of scalar curvature bounds for $\mathscr{C}^0$ metrics that she showed to be stable.    
 
Defining and understanding scalar curvature bounds in a non-smooth setting may also be of interest outside of Riemannian metric geometry. In fact, since nonnegative scalar curvature arises when constraining the Dominant Energy Condition of a Lorentzian spacetime to an initial Riemannian dataset, the ideas put forth here are naturally tied to the very active field of non-smooth spacetime geometry, where $\mathscr{C}^0$ regularity plays a crucial role (see the very recent works \cite{mondino-ryborz-samann, beran-harvey-samann} and references therein).

We point out that this note has a partly expository nature. We focus here on the core geometric ideas and on the main steps of the proofs, deferring the many technical details to \cite{fgp1} which will contain a comprehensive study of the IMCF for $\mathscr{C}^0$ metrics. We emphasize that \Cref{thm:c0-stability} can be improved with a localized statement, thereby completely removing the global topological and the isoperimetric assumptions on $g_j$. The latter are, in fact, added only to deal with the more familiar global IMCFs \cite{properness_IMCF_kaixu}. The localization can be obtained by replacing the global IMCF with a suitable local version, inspired by \cite[Theorem 3.1]{antonelli_fogagnolo_nardulli_pozzetta_positivemass} (see also \cite{kai-outer} for a different approach in the smooth setting). Moreover, we decided to focus only on nonnegative scalar curvature to have simpler statements and computations, but the extension to more general lower bounds is easily achieved. All of these improvements will be part of a future installment of this work.

Finally, we must mention the very recent developments by Mazurowski-Yao \cite{strs01,strs02}, which recover and improve upon Gromov's theorem \cite{gromov-billiards} in dimension $3$ through harmonic functions (see in this direction the very inspirational \cite{amo}) and $\mu$-bubbles respectively. We also highlight the work of Lee \cite{lee-ricciflownuovo}, which strengthens the approach of Bamler \cite{bamler} to recover the results of \cite{strs01} in all dimensions. 
We hope that working on and possibly unifying  the various perspectives currently arising in this context will lead to a deeper understanding of scalar curvature bounds in low regularity.

The paper is organized as follows. \Cref{sec:setting} introduces the weak IMCF and the variational mean curvature. Next, \Cref{thm:imcfbv} establishes the existence of a weak IMCF from a finite perimeter set with $2$-dimensional boundary (see \Cref{def:2dboundaries}) in a noncompact manifold $M$  endowed with a $\CS^0$-Riemannian metric and satisfying a global Euclidean isoperimetric inequality. In \Cref{sec:hawkingscalar} we first show the equivalence between nonnegative scalar curvature and \cref{eq:scalarimcfintro} in \Cref{prop:characterization}, in order to introduce the definition of nonnegative scalar curvature in the sense of IMCF. Finally, we sketch the proof of \Cref{thm:c0-stability}. 

\textbf{Acknowledgments.} The authors are grateful to Virginia Agostiniani, Luca Benatti, Camillo Brena, Paula Burkhardt-Guim, Esther Cabezas-Rivas, Emanuele Caputo, Luca Gennaioli, Robert Haslhofer, Gerhard Huisken, Lorenzo Mazzieri, Marco Pozzetta, Fabian Rupp, Felix Schulze, Peter Topping and Ivan Y. Violo  for their interest and very fruitful interactions. M. F. is supported by the STARS project “DEFORM” of the University of Padova.  M.F. and G.G. are partially supported by the Project “GIANTS” funded by INdAM. All authors are members of GNAMPA - INdAM and partially supported by the project “ANGELS”  funded by GNAMPA - INdAM.

\bigskip

\section{Setting and tools}\label{sec:setting}

We consider $M$ to be a smooth, oriented and noncompact $3$-manifold without boundary.  When we say that a global Euclidean isoperimetric inequality holds in $M$, for some continuous Riemannian metric $g$, we mean that there exists $\kst_{\mathrm{iso}} > 0$ such that
   \begin{equation}\label{eq:isop}
       \abs{\partial F}^{3/2} \geq \kst_{\mathrm{iso}}\abs{F}
   \end{equation}
for any bounded $F \subset M$.

When we write $\dif \sigma$ and $\dif \mu$ in the integrals, it is shorthand for $\dif \Hff^2$ and $\dif \Hff^3$, respectively.

Given a finite perimeter set $E$, we denote by $\partial^{\ast}E$ its reduced boundary and by $E^1$ and $E^0$ the measure theoretic interior and exterior respectively, i.e. the sets of points of density one and zero, respectively.

\subsection{Weak variational IMCF}

We will work in $M$ with a $\CS^0$-Riemannian metric $g$; in this section we isolate a weak notion of IMCF suitable for such a setting, which generalizes the Lipschitz formulation given by \cite{huisken_inversemeancurvatureflow_2001}, and is strongly inspired by \cite{Mazon_Segura_Dirichlet_problem_related_to_lvlset-IMCF}.
The following is the class of initial data we are working with.

\begin{definition}[$2$-dimensional boundaries]
\label{def:2dboundaries}
Let $E$ be a finite perimeter set, we say that $E$ has $2$-dimensional boundary if
    \begin{align}\label{eq:2-dim-boundary}
        \Hff^{2}(\partial E \setminus \partial^{\ast} E) =0.
    \end{align}
   
\end{definition}

\begin{definition}
\label{def:variational-IMCF}
Given a bounded open set \(E \subset M\) with finite perimeter and \(2\)-dimensional boundary, we say that a nonnegative function \(u \in \BV_{\loc}(M) \cap L^{\infty}_{\loc}(M)\) is a proper  weak variational solution to the IMCF of \(E\) if, for every bounded set of finite perimeter $K$ with $\Hff^{2}\left((K^1\cup \partial^*K)\setminus E^0\right) =0 $, $u$ minimizes the functional 
    \begin{align}\label{eq:variational-IMCF}
         J_u^K(v)  = \abs{Dv} (K^1\cup \partial^*K) + \int_{K^1 \cup \partial^*K} v \dif \abs{ Du}
    \end{align}
among all competitors $v\in \BV_{\loc}(M) \cap L^{\infty}_{\loc}(M)$ such that $v = u$ a.e. outside $K$, subject to:
 \begin{equation}
 \label{eq:weakdrichlet}
    \begin{cases}
u&=&0&\text{a.e. in  $E$},\\
\tr_{\partial E} u&=&0,&\\
\{u < t\}&\text{has}& \text{a bounded representative}&\text{for all } t\geq 0.
\end{cases}
 \end{equation}
  By $\tr_{\partial E} u =0$ we mean
\begin{equation}\label{eq:trace}
  \lim_{\rho \rightarrow 0} \,\dashint_{B_{\rho}(p) \cap M \setminus \overline{E}} \abs{u} \dif \mu = 0 \quad \text{for \(\Hff^{2}\)-a.e. }p \in \partial E.  
\end{equation}
We set 
\begin{equation}
    E_t = \mathrm{int}\{u \leq t\} \quad \mathrm{for} \, t\geq 0.
\end{equation}
\end{definition}
Under this definition, one can show, as in \cite{huisken_inversemeancurvatureflow_2001}, that for every $t\geq 0$, the set $E_t$ is strictly outward minimizing. In particular, $E_0 = E^*$, where $E^*$ is the strictly outward minimizing hull of the initial set $E$ \cite{Mazzieri_Fogagnolo_Minimising_hulls}.

\begin{remark}
When $u$ happens to be Lipschitz, it can be checked that the above notion coincides with  that of Huisken-Ilmanen \cite{huisken_inversemeancurvatureflow_2001}. In particular, when the ambient metric is smooth and the initial set $E$ is $C^2$, a proper weak variational solution to IMCF in the sense of \Cref{def:variational-IMCF} is a Huisken-Ilmanen's weak Lipschitz solution. Details on this will be included in \cite{fgp1}.
The first two conditions in \eqref{eq:weakdrichlet} naturally replace the requirement that $u$ vanishes on the boundary of $E$ in \cref{eq:IMCF}, in such a way that $E$ is prescribed to be the initial datum. Similarly, the last condition ensures that the solution is proper, analogous to requiring that $u \to +\infty$ at infinity. 
\end{remark}

\emph{For the whole duration of the note, we will refer to a proper weak variational solution to IMCF simply as weak IMCF, or weak solution to \eqref{eq:IMCF}}.

\medskip

The existence of a proper weak variational solution to the IMCF is ensured whenever the ambient metric $g$ satisfies a global isoperimetric inequality. The result holds regardless of the dimension, but we state here in dimension $3$ only.  Although conceptually simple, the proof rests on a number of rather technical steps. The full proof  will be contained in \cite{fgp1}.

\begin{theorem}
\label{thm:imcfbv}
Let $M$ be a smooth noncompact Riemannian $3$-manifold with a global Euclidean isoperimetric inequality \cref{eq:isop} and let $g$ be a complete $\CS^0$-Riemannian metric on $M$. Then, for any given bounded open set $E\subset M$ with finite perimeter and $2$-dimensional boundary, there exists a unique proper weak variational solution to the IMCF of $E$.
\end{theorem}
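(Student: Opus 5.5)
We follow the elliptic regularization strategy of Huisken--Ilmanen, adapted to the $\BV$ variational framework of \Cref{def:variational-IMCF}, where compactness is replaced by $\BV$ compactness and lower semicontinuity.

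\textbf{Step 1: approximation.} We mollify $g$ to smooth metrics $g_k \to g$ in $\CS^0_{\loc}$ with $(1-\varepsilon_k) g \le g_k \le (1+\varepsilon_k) g$. Up to a constant depending only on $\varepsilon_k$, each $g_k$ still satisfies \cref{eq:isop}. We then solve a regularized problem for each $k$: either the elliptic $\varepsilon$-regularization on large domains $\Omega_L$, or directly the minimization of $J^K_u$ on a bounded domain with boundary datum $u = L$ on $\partial \Omega_L$. This produces approximate solutions $u_{k,L}$ that vanish on $E$ and whose sublevel sets are outward minimizing.

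\textbf{Step 2: uniform estimates.} This is the crucial step. The equation $\div(\nabla u/|\nabla u|)=|\nabla u|$ is not available classically, so the usual Lipschitz bounds via barriers ($\sup|\nabla u|\le \sup_{\partial E} \H$) are unavailable. We will need only $\BV$ and $L^\infty_{\loc}$ bounds instead. Testing minimality with $v=\min(u,t)$ gives the exponential area growth $|\partial E_t| \le e^t |\partial E^*|$. By the coarea formula this yields
\[
|Du|(\{u<t\}) \le C t e^t |\partial E|.
\]
The isoperimetric inequality \cref{eq:isop} then gives $|E_t| \le C_{\mathrm{iso}}^{-1} e^{3t/2} |\partial E|^{3/2}$. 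Properness, i.e.\ $\{u<t\}$ bounded uniformly in $k$ and $L$, is the main obstacle. We plan to use the isoperimetric inequality together with the outward minimizing property and a density estimate: a point of $\partial E_t$ far away forces a definite perimeter, through the lower density bound for outward minimizers,
\[
|\partial E_t \cap B_r(x)| \ge c\, r^2,
\]
which holds uniformly because the metrics are uniformly equivalent and \cref{eq:isop} holds. Combined with the area bound, this confines $E_t$ to a bounded region. Here completeness of $g$ and \cref{eq:isop} are what replace the subsolution barriers of Huisken--Ilmanen, following \cite{properness_IMCF_kaixu}.

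\textbf{Step 3: passing to the limit.} With the local $\BV$ and $L^\infty$ bounds, we extract $u_{k,L}\to u$ in $L^1_{\loc}$ as $L\to\infty$ and $k\to\infty$. We show minimality of $J^K_u$ in the style of Huisken--Ilmanen's compactness theorem: given a competitor $v$ with $v=u$ outside $K$, we glue it to $u_{k,L}$ using a cutoff along a good level of the distance function. Lower semicontinuity of the total variation, together with the $\CS^0$ convergence of the metrics (perimeters with respect to $g_k$ and $g$ differ by factors $1\pm\varepsilon_k$), controls the first term of $J^K$. For the term $\int v\,\dif|Du|$, we need $|Du_{k,L}|\to|Du|$ as measures, with no loss of mass. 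We obtain this by testing minimality of $u_{k,L}$ against $u$ itself. The boundary condition $\tr_{\partial E}u=0$ passes to the limit using \eqref{eq:2-dim-boundary} and the uniform growth bound near $\partial E$. The condition $u=0$ a.e.\ in $E$ is preserved trivially.

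\textbf{Step 4: uniqueness.} We compare two solutions $u_1,u_2$ by testing minimality of each with $\min(u_1,u_2)$ and $\max(u_1,u_2)$ on sets $K=\{u_i<t\}$. This is admissible because these sets are bounded by properness. Adding the resulting inequalities and following the Huisken--Ilmanen argument (comparison of $\{u_1 > u_2+\delta\}$ with exponential weights) shows $u_1\le u_2$ a.e., and by symmetry $u_1=u_2$.
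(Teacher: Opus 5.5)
There is a genuine gap in Step 2. You never produce a uniform \emph{upper} bound for the approximating solutions on compact sets, yet Step 3 starts from ``local $BV$ and $L^\infty$ bounds''. Everything you derive controls $u$ either on its own sublevel sets or from below far away: the area law $|\partial E_t|=e^t|\partial E^*|$, the bound on $|Du|(\{u<t\})$, the volume bound, and properness. None of it excludes that, as $k\to\infty$ (and $L\to\infty$), the sets $\{u_{k,L}<t\}$ collapse onto $E^*$ for every fixed $t$, i.e.\ that $u_{k,L}\to+\infty$ locally outside $E^*$. In that scenario your bound on $|Du|(\{u<t\})$ gives no local $BV$ control either, because $\{u_{k,L}<t\}$ contains no fixed compact set. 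In the smooth theory this is prevented by the Huisken--Ilmanen interior gradient estimate. That estimate needs a lower Ricci bound, which degenerates along mollifications of a $\CS^0$ metric; you note this yourself, but nothing in your proposal replaces it.

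Supplying that replacement is the core of the paper's proof. Each $u_j$ is the locally uniform limit as $p\to1^+$ of $-(p-1)\log w_j^p$, with $w_j^p$ $p$-harmonic for $g_j$. A Harnack inequality whose constants depend only on $\CS^0$ data then gives the oscillation bound \eqref{eq:harnack} on compact subsets of $M\smallsetminus\overline{E}_j$, uniformly in $j$. Hence if $u_j$ blew up somewhere, all sublevel sets would collapse onto $E^*$. This is excluded by using as upper barriers the IMCFs issuing from a tiny ball inside $E$ around which the metric has been made flat; their early evolution is explicitly radial and so cannot collapse.

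Your properness argument also rests on a false claim: outward minimality plus \eqref{eq:isop} does not give $|\partial E_t\cap B_r(x)|\ge c\,r^2$. Comparing with $E_t\cup B_r(x)$ and using the isoperimetric inequality only yields the exterior volume bound $|B_r(x)\smallsetminus E_t|\ge c\,r^3$. A long thin convex cylinder in $\mathbb{R}^3$ is outward minimizing and violates your bound near its tip at scales much larger than its radius. A two-sided density estimate would require $E_t$ to be a perimeter quasi-minimizer, i.e.\ control of the weight $|\nabla u|$ in $|\partial F|-\int_F|\nabla u|$, which is unavailable for the reason above. For properness you should instead invoke the uniform properness of \cite[Theorem 4.1]{properness_IMCF_kaixu} under a common isoperimetric constant, as the paper does.

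Some smaller points. $J^K_u$ depends on the unknown $u$, so it cannot be ``directly minimized''. Elliptic regularization needs a smooth initial boundary for its boundary gradient estimate; this is why the paper first replaces $E$ by smooth $E_j\supset E$ converging in perimeter (this is where \eqref{eq:2-dim-boundary} enters) and then uses the existing Lipschitz theory. In Step 4, $\{u_i<t\}$ contains $E$, so it is not an admissible $K$ in \Cref{def:variational-IMCF}. Moreover, the Huisken--Ilmanen uniqueness argument uses continuity to conclude that $\{u>v\}$ is empty once $u,v$ are constant on it, and for $BV$ functions this step must be redone. The paper does this with coarea and a local isoperimetric inequality, obtaining $|\{u>v\}|=0$ from $|Du|(\{u>v\})=|Dv|(\{u>v\})=0$.
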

\begin{proof}[Sketch of the proof]
To guide the reader, we divide the proof in steps.
%that will be reflected in the presentation in \cite{fgp1}.\\

\textit{Step 1: approximation.}
    We first consider a sequence of smooth regularizing metrics $g_j$ such that $g_j\to g$ \emph{globally} uniformly in $\CS^0$ \cite[Proposition 4.9]{Lenght_structures_continuous_manifold_Burtscher}. The global uniformity in particular implies that there exists an isoperimetric constant $\kst_{\mathrm{iso}}$ common to all $g_j$'s and $g$. 

    Then, \(E\) having \(2\)-dimensional boundary allows us to apply \cite[Theorem 1.1]{strict_approximation_schmidt} to find a sequence \(E_j\) of equibounded open sets with smooth boundary which converge to \(E\) as sets of finite perimeter and satisfy \(E \subset E_j\) for all \(j \in \N\).
    .
    For each $g_j$, we can apply \cite[Theorem 1.2]{properness_IMCF_kaixu} or \cite{benatti-mari-rigoli-setti-xu} and obtain a proper, Lipschitz IMCF  $u_j$ starting from \(E_j\). \\

\textit{Step 2: $L^\infty_{\mathrm{loc}}$-estimates.}
We first crucially observe that the $u_j$'s naturally come with a uniform bounded oscillation estimate. Indeed, by \cite[Theorem 2.8]{BPP_monotonicity_formulas}, we can realize $u_j$ as the local uniform limit as $p\to 1^+$ of $u_j^p= -(p-1)\log(w_j^p)$, where $w_j^p$ is $p$-harmonic with respect to the metric $g_j$.  Then, the Harnack inequality of \cite{rigoli-salvaroti-vignati} applies to $w_j^p$, translating into 
\begin{equation}
\label{eq:p_harnack}
    u^p_j(x) \leq u^p_j (y) + \kst_K
\end{equation}
for any $K \Subset M\setminus \overline{E}_j$, with $\kst_K$ independent of $j$ and $p$. The constant $\kst_K$ can be checked to depend on $\CS^0$ quantities only.
We thus get 
\begin{equation}\label{eq:harnack}
    u_j(x) \leq u_j (y) + \kst_K
\end{equation}
for any $K \Subset M\setminus \overline{E}_j$, with $\kst_K$ independent of $j$. 

The validity of \eqref{eq:harnack} implies that if the sequence $u_j$ were not locally uniformly bounded, the level sets $\{u_j < t\}$, for $t > 0$ would collapse to $E^*$ as $j \to \infty$. 

We first show that this cannot happen for an auxiliary IMCF built as follows. Let $o \in E$, $R > 0$ small enough such that $\overline{B}_{2R}(o)$ is a closed disk satisfying $\overline{B}_{2R}(o) \subset E^1$. Let $\tilde{g}$ be a continuous metric that coincides with $g$ in $M \setminus B_{2R}(o)$ and with the flat metric $\delta$ in $B_R(o)$. Let $\tilde{g}_j$ be  uniform smooth approximations of $\tilde g$ that coincide with \(g_j\) in \(M \setminus B_{2R}(o)\) and with $\delta$ in $B_{R/2}(o)$. If $R$ is small enough \cite[Lemma 2.6.3]{Kai_thesis} ensures that for $\rho \leq R/4$ Euclidean balls $\mathbb{B}_\rho(o)$ are strictly outward minimizing with positive mean curvature for all $\tilde g_j$'s. In particular, the weak IMCFs $\tilde{u}_j$ issuing from  $\mathbb{B}_{R/8}(o)$ are all smooth and given by $u_j(x) = f(\abs{x})$, for $R/8 \leq \abs{x} \leq R/4$, which implies that $\{\tilde{u}_j < t\} \to \mathbb{B}_{R/8}(o)$ cannot happen. By construction of \(\tilde{g}_j\), the functions $\tilde{u}_j$ are actually $g_j$-IMCF outside \(E\) and they can be used as a barrier to obtain uniform $L^\infty_{\mathrm{loc}}$ bounds on $u_j$.\\

\textit{Step 3: $BV$-estimates and existence.}
% The approximating IMCFS's $u_j$ are Lipshitz weak solutions to \cref{eq:IMCF} in the sense of \Cref{def:variational-IMCF}.
Once uniform $L^\infty_{\mathrm{loc}}$-estimates on $u_j$ are available, the weak formulation itself, together with the (locally) uniform convergence of the metrics, implies $BV$-compactness for $u_j$. Up to subsequence, we can take the limit $u_j \to u$ and verify that $u$ is a proper weak variational solution to IMCF. To show the global properness of \(u\), we crucially use the fact that \((M,g)\) has a global isoperimetric inequality. In fact, it follows from the global uniform convergence \(g_j \to g\)  that a global Euclidean isoperimetric inequality holds on all \((M,g_j)\) with isoperimetric constant independent of \(j\), so one can use \cite[Theorem 4.1]{properness_IMCF_kaixu} to see that the \(u_j\) are uniformly proper and obtain that \(u\) must be proper as well.\\

\textit{Step 4: uniqueness.} The uniqueness follows from a nontrivial generalization of \cite[Uniqueness Theorem 2.2]{huisken_inversemeancurvatureflow_2001}. In the original proof, the continuity of two compared solutions \(u, v\) is required to show that if \(u,v\) are constant on \(\lbrace u > v \rbrace \subset \subset M \setminus \overline{E}\), then \(\lbrace u > v \rbrace\) must be empty. To make up for the lack of continuity, one can instead combine the coarea formula for BV functions together with a local isoperimetric inequality to show that if \(\abs{Du}(\lbrace u > v\rbrace) = \abs{Dv}(\lbrace u > v \rbrace) = 0\) then \(\abs{\lbrace  u > v\rbrace} = 0\). The rest of the proof can be carried out as in the original, taking into account the technicalities introduced by the use of $BV$ functions.
\end{proof}

\begin{remark}
The principle that the weak formulation of \Cref{def:variational-IMCF} together with uniform $L^\infty_{\mathrm{loc}}$-estimates implies $BV$-compactness is not new, although it has never been used in a $\mathscr{C}^0$ setting before. It was, in fact, used in  the classical Euclidean setting \cite{Mazon_Segura_Dirichlet_problem_related_to_lvlset-IMCF, Mazon_Segura_Non-homogeneus_elliptic_problem_lvlset-IMCF}
and the in the anisotropic setting \cite{Cabezas-Rivas_Moll_Solera_Anisotropic_IMCF}, which serve as important inspirations for this part of the proof. The attainment of $u = 0$ on $\partial E$ in the trace sense is also inspired by these papers; we have slightly improved in this direction by relaxing their assumption of Lipschitz initial boundary to requiring \cref{eq:2-dim-boundary} only.
% It benefits also of a further equivalent definition of proper weak IMCF that uses the notion of Anzellotti's pairing \cite{Anzellotti_pairing_between_measures_and_bounded_functions, Crasta_DeCicco_Anzellotti_pairing_theory_and_Gauss_Green}. 

On the other hand, the argument presented in
\textit{Step 2} seems to be novel, and genuinely motivated by the lack of both barriers at infinity and local curvature estimates in general $\mathscr{C}^0$-regular metrics.
In fact, in the aforementioned works, these estimates are derived using explicit barrier functions that we cannot provide in our context. Where such barriers are not available, as in nonflat Riemannian settings, local bounds on the Ricci curvature are  employed \cite{huisken_inversemeancurvatureflow_2001, properness_IMCF_kaixu, benatti-mari-rigoli-setti-xu}.  Again, this is of course harmless in any smooth ambient, but not available in general $\mathscr{C}^0$ contexts. 
\end{remark}

\subsection{Variational mean curvature}

\begin{definition}
\label{def:continuous-phi-bubble}
We say that an open bounded set $E \subset M$ has continuous variational mean curvature $\varphi$ if there exist two bounded sets $U_1 \Subset E$, $U_2 \Supset E$ and $\varphi \in \CS^0(M)$ such that $E$ is a minimizer of
\begin{equation}
\label{eq:phi-bubble-def}
    \abs{\partial F} -\int_F \varphi \dif\mu
\end{equation}
among all competitors $F$ with finite perimeter satisfying $U_1\subset F \subset U_2$.
\end{definition}

In a smooth ambient space,  the class of $\CS^2$ bounded sets coincides with that of sets with continuous variational mean curvature. 
\begin{lemma}
\label{lem:bubble} Let $(M,g)$ be a smooth Riemannian manifold.
Let $E \subset M$ be an open bounded set with $\CS^2$ boundary. Then, there exist $U_1 \Subset E$, $U_2 \Supset E$ and $\varphi \in \CS^0(M)$ such that $E$ is the unique minimizer of
\begin{equation}
\label{eq:phi-bubble-lemma}
    \abs{\partial F} -\int_F \varphi \dif\mu
\end{equation}
among all competitors $F$ with finite perimeter satisfying $U_1\subset F \subset U_2$. In particular, $E$ has continuous variational mean curvature $\varphi$.
\end{lemma}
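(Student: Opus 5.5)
The idea is to build $\varphi$ as a calibration-type function from the signed distance to $\partial E$. Since $\partial E$ is $\CS^2$ and compact, there is $\delta>0$ such that the signed distance $d$ (negative inside $E$, positive outside) is $\CS^2$ in the tubular neighbourhood $T_\delta=\{\abs{d}<\delta\}$, with $\abs{\nabla d}=1$ there. In $T_\delta$ we have $\Delta d(x)=\H_{\Sigma_s}(x)$, where $\Sigma_s=\{d=s\}$, $s=d(x)$, and $\H$ is the mean curvature of the parallel surface with respect to the outward normal $\nabla d$. Set $U_1=\{d<-\delta/2\}$ and $U_2=\{d<\delta/2\}$. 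Both are bounded, $U_1\Subset E$ and $U_2\Supset E$ for $\delta$ small.

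We take
\begin{equation}
\varphi=\Delta d - \lambda d \quad\text{in } T_\delta,
\end{equation}
with $\lambda>0$, and extend it to a continuous function on $M$ arbitrarily outside $T_{\delta}$. The values outside $\overline{U_2}\setminus U_1\subset T_\delta$ are irrelevant, since competitors all contain $U_1$ and are contained in $U_2$. The calibration argument runs as follows. Let $F$ be a finite perimeter set with $U_1\subset F\subset U_2$. Then $F\triangle E\subset \overline{U_2}\setminus U_1$, and the vector field $X=\nabla d$ is $\CS^1$ with $\abs{X}=1$ there.

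By the divergence theorem for finite perimeter sets, applied to $F$ and $E$ (both coincide with $U_1$ near $\partial U_1$ and sit inside $U_2$),
\begin{equation}
\int_{F}\div X\dif\mu-\int_E\div X\dif\mu=\int_{\partial^*F}\langle X,\nu_F\rangle\dif\sigma-\int_{\partial E}\langle X,\nu_E\rangle\dif\sigma\le \abs{\partial F}-\abs{\partial E}.
\end{equation}
Here we use $\langle X,\nu_E\rangle=1$ on $\partial E$ and $\abs{X}\le1$. The boundary contributions near $\partial U_1$ cancel, since $F$ and $E$ agree there; to make this precise, integrate over $F\cap V$ and $E\cap V$ with $V=U_2\setminus \overline{U_1'}$ for slightly smaller $U_1'$, or simply use a cutoff. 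Since $\div X=\Delta d$, we obtain
\begin{equation}
\abs{\partial F}-\int_F\varphi\dif\mu-\Big(\abs{\partial E}-\int_E\varphi\dif\mu\Big)\ge \lambda\Big(\int_F d\dif\mu-\int_E d\dif\mu\Big)=\lambda\int_{F\triangle E}\abs{d}\dif\mu.
\end{equation}
The last equality holds because $d>0$ on $F\setminus E$ and $d<0$ on $E\setminus F$. The right-hand side is $\ge0$, and it vanishes only if $\abs{F\triangle E}=0$. This gives both minimality and uniqueness, up to null sets, which is the natural notion for finite perimeter sets. Note that the term $-\lambda d$ is what produces \emph{strict} minimality; with $\lambda=0$ one only gets minimality.

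The main technical point I expect is the justification of the divergence-theorem step with the inner constraint. Concretely, one needs to check that the boundary of $F$ relative to the region $\overline{U_2}\setminus U_1$ does not pick up contributions on $\partial U_1$, where $F$ and $E$ coincide. Handling this via the localized Gauss–Green formula on the open set $\{-\delta<d<\delta\}$, where $F$ and $E$ agree near the lower end and are both empty near the upper end, should settle it. Regularity requirements are mild: $d\in\CS^2$ in $T_\delta$ follows from $\partial E\in\CS^2$, so $\varphi$ is continuous, as required.
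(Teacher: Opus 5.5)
Your proof is correct, and it takes a genuinely different route from the paper's. Both arguments use the same geometric input: the foliation of a tube around $\partial E$ by parallel surfaces, with $\varphi>\H$ on the inner leaves and $\varphi<\H$ on the outer ones. Your choice $\varphi=\Delta d-\lambda d$ is exactly of this type.

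The paper uses the leaves as \emph{barriers}. It obtains a constrained minimizer $\widetilde E$ by the direct method, then invokes White's maximum principle to keep $\partial\widetilde E$ off the obstacles $\Sigma^{\overline s}_i$. Next it applies regularity theory for prescribed-mean-curvature minimizers, which is why it takes $\varphi$ smooth away from $\partial E$. Finally it uses the classical maximum principle against the leaves to force $\widetilde E=E$.

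You instead use the unit normal $\nabla d$ of the foliation as a \emph{calibration}, which is the integrated form of the same comparison. This buys several things. You need no existence or regularity of minimizers and no maximum principle, and merely continuous $\varphi$ suffices. The same computation shows that any continuous $\varphi$ with $(\Delta d-\varphi)\,d>0$ in the punctured tube works, i.e.\ the paper's whole class of $\varphi$. You also get the quantitative excess $\lambda\int_{F\triangle E}\abs{d}\dif\mu$, which gives uniqueness up to null sets directly. The paper's barrier route fits the maximum-principle techniques used elsewhere in the paper, but it leans on several nontrivial external theorems.

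The step you flagged is harmless. Take $Y=\zeta(d)\nabla d$ with $\zeta\equiv 1$ on $[-3\delta/4,3\delta/4]$ and $\mathrm{supp}\,\zeta\subset(-\delta,\delta)$. Both $F\triangle E$ and $\partial^*F$ lie in $\{\abs{d}\le\delta/2\}$, where $Y=\nabla d$ and $\mathrm{div}\,Y=\Delta d$. The global Gauss--Green formula applied to $F$ and $E$ then gives your inequality at once.

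One fix is needed: extend $\varphi$ from a closed sub-tube such as $\{\abs{d}\le 3\delta/4\}$, for instance via Tietze. A continuous function on the open set $T_\delta$ need not extend continuously to $M$.
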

\begin{proof}
Let $U^s_1 = \{x \in E \, | \, d(x, \partial E) > s\}$ and $U^s_2 = \{x \in M \setminus \overline{E} \, | \, d(x, \partial E) < s\}$ and $\Sigma_i^s = \partial U_i^s$ for $i = 1, 2$. Take $\overline{s}$ small enough so that for any $0 \leq s \leq \overline{s}$ the sets $\Sigma_1^s$ and $\Sigma_2^s$ are $\CS^2$ hypersurfaces. 
%Let $U_i = U_i^{\overline{s}}$ for $i= 1, 2$.

 For $x_s \in \Sigma_i^s$, let $\H(x_s)$ be the mean curvature of $\Sigma_i^s$ at $x_s$. Observe that the function $\H(x)$ is continuous. Define $\varphi \in \CS^0(M) \cap C^\infty(M \smallsetminus \partial E)$ so that, for all $0 < s \leq \overline{s}$ we have $\varphi(x_s) > \H(x_s)$ if $x_s \in \Sigma_1^s$ and $\varphi(x_s) < \H(x_s)$ if $x_s \in \Sigma_2^s$.  
     
By the direct method in the Calculus of Variations there exists     a minimizer $U_1^{\overline{s}} \subset \widetilde{E} \subset U_2^{\overline{s}}$ for \eqref{eq:phi-bubble-lemma} among competitors $F$ satisfying $U_1^{\overline{s}} \subset \widetilde{E} \subset U_2^{\overline{s}}$. We conclude by observing that the maximum principle forces $\widetilde{E} = E$. Namely, one can first invoke \cite[Theorem 5]{white-maximum} to see that $\partial\widetilde{E} \cap \Sigma_i^{\overline{s}} = \emptyset$ for both $i =1, 2$. By classical regularity theorems (see e.g. \cite[Theorem 2.2]{zhou-zhu} for an account) we then deduce that $\partial \widetilde{E} \smallsetminus \partial E$ is smooth, since it is disjoint from the constraining boundaries $\Sigma_i^{\overline{s}}$. Finally, by the classical maximum principle $\partial{\widetilde E} \cap \Sigma_i^s = \emptyset$ for $i =1, 2$ and $0 < s < \overline{s}$, implying $\partial \widetilde{E} = \partial E$, and concluding the proof. 
\end{proof}

\section{Hawking mass and scalar curvature bounds}
\label{sec:hawkingscalar}

Let $(M,g)$ be a smooth, complete, noncompact Riemannian $3$-manifold without boundary and consider a solution $u$ of \cref{eq:IMCF} for $(M, g)$  with $E$ of class  $\CS^2$. Then, thanks to the existence and regularity theory developed by Huisken and Ilmanen \cite{huisken_inversemeancurvatureflow_2001}, $u$ is in fact Lipschitz and its level sets are of class $\CS^{1,\alpha} \cap W^{2,2}$. Such regularity suffices for the definition of the quantity
\begin{equation}\label{eq:hawkingclassica}
m_H(t) = e^{\frac{t}{2}}\left(16\pi - \int_{\partial E_t}\H^2\dif \sigma\right),
\end{equation}
where $\H$ is the weak mean curvature of $\partial E_t$, that by the weak formulation of \cref{eq:IMCF} coincides with $\abs{\nabla u}$ almost everywhere on $\partial E_t$ for almost every \(t > 0\). 
  The quantity \cref{eq:hawkingclassica} coincides 
    with $\ma_H(\partial E_t)$  defined in \cref{eq:Hawking-mass}  up to a multiplicative constant.
Indeed, the evolution equation for the area yields $\abs{\partial E_t}=\ee^t\abs{\partial E^\ast}$, where $E^\ast$ is the strictly outward minimizing hull of $E$.

%, and in particular the following quantity, that substantially is the Hawking mass of $E_t$, is well defined 

\begin{theorem}
\label{prop:characterization}
   Let $(M, g)$ be a smooth, complete, oriented, noncompact Riemannian $3$-manifold  with $H_2(M) = \{0\}$. Assume that a global Euclidean  isoperimetric inequality \cref{eq:isop} holds.
   Then, the following are equivalent:
   \begin{itemize}
\item[(i)] $(M, g)$ has nonnegative scalar curvature; 
\item[(ii)] for any bounded  $E \subset M$  with connected boundary and continuous variational mean curvature, we have
\begin{equation}
\label{eq:hawkingintegrata}
\int_0^T m_H(t) \dif t \geq T  \left(16\pi - \int_{\partial E} \H^2 \dif\sigma \right),
\end{equation}
where $\H$ is the weak (distributional) mean curvature of $\partial E$.
\end{itemize}
\end{theorem}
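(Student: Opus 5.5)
Both implications hinge on the Geroch–Huisken–Ilmanen monotonicity formula for the weak flow. On a smooth manifold with $H_2(M)=\{0\}$, the level sets $\partial E_t$ stay connected whenever $\partial E$ is connected. Then \cite{huisken_inversemeancurvatureflow_2001} gives, for a.e.\ $0\le s<t$,
\begin{equation}
m_H(t)-m_H(s)\geq \int_s^t e^{\frac{\tau}{2}}\int_{\partial E_\tau}\left(2\abs{\nabla_{\partial E_\tau}\log\H}^2+\frac12\abs{\mathring{h}}^2+\mathrm{R}\right)\dif\sigma\dif\tau .
\end{equation}
The initial value is $m_H(0)=16\pi-\int_{\partial E^*}\H^2\dif\sigma$. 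On $\partial E^*\cap\partial E$ the mean curvature is unchanged, while on $\partial E^*\smallsetminus\partial E$ it vanishes. Hence $m_H(0)\ge 16\pi-\int_{\partial E}\H^2\dif\sigma$. The class of sets with continuous variational mean curvature coincides with $\CS^2$ sets by \Cref{lem:bubble}, and conversely a set with continuous variational mean curvature is $\CS^{1,\alpha}$ with weak mean curvature $\varphi$ on $\partial E$. This is enough regularity to run the Huisken–Ilmanen theory; existence and properness of the flow follow from the isoperimetric inequality as in \Cref{thm:imcfbv}.

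\textbf{(i)$\Rightarrow$(ii).} If $\mathrm{R}\ge0$, the formula gives $m_H(t)\ge m_H(0)\ge 16\pi-\int_{\partial E}\H^2$ for a.e.\ $t$. Integrating over $[0,T]$ yields \eqref{eq:hawkingintegrata}. The only care needed is the jump at $t=0$ when $E\ne E^*$, which the hull comparison above handles.

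\textbf{(ii)$\Rightarrow$(i).} I argue by contradiction, using small round balls. Suppose $\mathrm{R}(p)<0$ at some point $p$. Take $E=B_r(p)$ with $r$ small. Then $E$ is smooth, connected, strictly mean convex and outward minimizing, so $E^*=E$ and the flow is smooth for a short time $[0,T_0]$. On that interval the monotonicity formula is an identity, so
\begin{equation}
m_H(t)-m_H(0)=\int_0^t e^{\frac{\tau}{2}}\int_{\partial E_\tau}\left(2\abs{\nabla\log\H}^2+\tfrac12\abs{\mathring h}^2+\mathrm{R}\right)\dif\sigma\dif\tau .
\end{equation}
Near $p$ the metric is almost Euclidean in normal coordinates. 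For small spheres, $\mathring h$ and $\nabla\H$ are $O(r)$, so the first two terms integrate to $O(r^4)$ over $\partial E_\tau$. The curvature term satisfies $\int_{\partial E_\tau}\mathrm{R}\approx 4\pi r^2e^{\tau}\mathrm{R}(p)<0$, which is of order $r^2$. Hence $m_H(t)<m_H(0)$ for $0<t\le T$, with some fixed small $T$ chosen uniformly in $r$ (after rescaling, the IMCF from $B_r$ on $[0,T]$ is a perturbation of the round Euclidean flow). Integrating gives $\int_0^Tm_H<T\,m_H(0)=T(16\pi-\int_{\partial E}\H^2)$, contradicting (ii).

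The main obstacle is making this expansion quantitative. I need uniform control of the flow's smoothness and of $\mathring h$ and $\nabla\H$ along $\partial E_\tau$ for $\tau\in[0,T]$ as $r\to0$. I would get it by blowing up $g$ around $p$ by the factor $r^{-1}$: the rescaled metrics converge smoothly to $\delta$, and stability of smooth IMCF from the unit sphere then keeps the rescaled flow $\CS^{2}$-close to expanding round spheres. This gives $\mathring h=O(r)$ and $\abs{\nabla\H}=O(r)$ in unrescaled terms. Since scale invariance of $\int\H^2$ makes $m_H$ dimensionless, the comparison reduces to checking $r^2\abs{\mathrm{R}(p)}\gg r^4$, which holds for small $r$.
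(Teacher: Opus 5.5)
Your proof is correct and follows essentially the same route as the paper: for (i)$\Rightarrow$(ii), Huisken--Ilmanen's Geroch monotonicity (with connectedness of the level sets from $H_2(M)=\{0\}$) plus the hull comparison $\int_{\partial E^*}\H^2\dif\sigma\le\int_{\partial E}\H^2\dif\sigma$; for (ii)$\Rightarrow$(i), small geodesic balls, the Smooth Start Lemma, the exact Geroch identity for the smooth flow, and normal-coordinate expansions.

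The uniform-in-$r$ blow-up you identify as the main obstacle is unnecessary. The paper fixes $R$ and uses $\int_0^T m_H\dif t - Tm_H(0) = \tfrac{T^2}{2}m_H'(0)+o(T^2)$ to extract $m_H'(0)\geq 0$ from (ii). It then averages $m_H'(0)$ over $\partial B_R$ and lets $R\to0$. In your contradiction argument, continuity in $t$ at a fixed small $r$ already suffices.

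Two harmless slips should be corrected:
\begin{itemize}
\item On small geodesic spheres it is $\abs{\nabla\log\H}$ that is $O(r)$. By contrast, $\abs{\nabla\H}$ is in general $O(1)$, because $\H=\tfrac{2}{r}-\tfrac{r}{3}\mathrm{Ric}_p(\theta,\theta)+O(r^2)$.
\item The coefficient of $\abs{\mathring h}^2$ in the Geroch formula is $1$, not $\tfrac12$. Huisken--Ilmanen's term $\tfrac12(\lambda_1-\lambda_2)^2$ equals $\abs{\mathring h}^2$.
\end{itemize}
Neither slip affects your conclusion, since these terms still integrate to $O(r^4)$.
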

\begin{remark}
    In the setting of condition $(ii)$, the regularity theory for prescribed mean curvature sets implies that, if $\varphi$ is a variational mean curvature for $\partial E$, then $\H = \varphi_{|\partial E}$.
\end{remark}
\begin{proof}
$(i) \Rightarrow (ii)$. Let $E \subset M$ be a bounded set with connected boundary and continuous variational mean curvature. By classical regularity theory, $\partial E$ is of class $\CS^{1,\alpha} \cap W^{2, p}$ for any $0 < \alpha < 1$ and $1<p<+\infty$, with continuous distributional mean curvature $\H$ (see e.g. \cite[Theorem 2.2]{zhou-zhu} for an account). Geroch monotonicity for the weak IMCF \cite[Geroch Monotonicity Formula 5.8] {huisken_inversemeancurvatureflow_2001} then yields
$
m_H(t) \geq m_H(0)$
for any $t \geq 0$. Notice that for this we need the connectedness of the level sets, which is granted by the topological assumption $H_2(M) = \{0\}$, see \cite[Proposition 1]{bray-miao} or \cite[Proposition 2.5]{benatti-fogagnolo-mazzieri}. Since $E_0 = E^*$ is the strictly outward minimizing hull of $E$, we also have  $\int_{\partial E_0} \H^2 \dif \sigma\leq \int_{\partial E} \H^2\dif \sigma$ (from, e.g. \cite[(1.15)]{huisken_inversemeancurvatureflow_2001}), and so 
\begin{equation}
    m_H(t) \geq m_H(0)\geq  \left(16\pi - \int_{\partial E} \H^2 \dif\sigma \right)
\end{equation}
for any $t \geq 0$. Integrating over $[0, T]$ provides \eqref{eq:hawkingintegrata}. 
\smallskip

$(ii) \Rightarrow (i)$. Let $p \in M$.  We aim to show that $\mathrm{Scal}(p) \geq 0$. For small enough radii $R$, $\partial B_R(p) \approx \mathbb{S}^2$ is smooth, strictly mean-convex and strictly outward minimizing (see \cite[Lemma 2.6.3]{Kai_thesis}). Furthermore, by \Cref{lem:bubble}, $B_R(p)$ has continuous variational mean curvature. Let then $E = E_0 =  B_R(p)$ and take $u$ weak a solution of \cref{eq:IMCF} with initial datum $E$. Observe that the right hand side of \eqref{eq:hawkingintegrata} coincides in this case with $m_H(0)$. By \cite[Smooth Start Lemma 2.4]{huisken_inversemeancurvatureflow_2001}, $u$ is smooth and nondegenerate up to $\partial B_R$, and in particular so is $m_H(t)$ for $t \in [0, T]$, with $T$ small enough. By \eqref{eq:hawkingintegrata}, we have
\begin{equation}
\label{eq:conto-gerochsmooth}
0 \leq 2\lim_{T \to 0^+}\frac{\int_0^Tm_H(t) \dif t  - Tm_H(0)}{T^2} = m_H'(0) = \int_{\partial B_R} \mathrm{Scal} + \abs{\mathring{\h}}^2 + 2\abs{\nabla_{\partial B_R} \log \H}^2 \dif\sigma
\end{equation}
where $\mathring{\h}$ denotes the traceless second fundamental form of $\partial B_R$. The last equality is readily obtained combining the evolution equation of the Willmore functional together with Gauss equation and Gauss-Bonnet theorem - again, connectedness is crucial here (see \cite{huisken_inversemeancurvatureflow_2001}, or \cite[Section 2.1]{benatti-fogagnolo-survey} for a direct derivation). 
%The smooth classical computation leading to the right hand side can be found in \cite{huisken_inversemeancurvatureflow_2001}; see \cite[Section 2.1]{benatti-fogagnolo-survey} for a direct derivation. 
%Observe now that 
Then from the classical Taylor expansion of a Riemannian metric in normal coordinates (see e.g. \cite[(3.4)]{schoen-yau-lectures}), one can deduce that
\begin{equation}
\label{eq:control-expansion}
\sup_{\partial B_R} |\mathring{\h}|^2 + |\nabla_{\partial B_R} \log \H|^2 \leq \kst R^2 .
\end{equation}
To see it, it suffices to recall that in geodesic polar coordinates around $p$ we have that the second fundamental form $\h$ satisfies $2\h_{ij}= \partial_r g_{ij}$, and exploit the aforementioned expansion read in these coordinates. Then, combining \eqref{eq:conto-gerochsmooth} and \eqref{eq:control-expansion} we conclude that
\begin{equation}
  0\leq \lim_{R\to 0^+}\dashint_{\partial B_R} \mathrm{Scal} + |\mathring{\h}|^2 + 2|\nabla \log \H|^2 \dif \sigma = \mathrm{Scal}(p),  
\end{equation}
as claimed. 
\end{proof}

Keeping in mind that $\H=\abs{\nabla u}$ by the definition of \cref{eq:IMCF}, through the coarea formula  the inequality \cref{eq:hawkingintegrata} directly translates into
\begin{equation}
    32\pi\left(e^{t/2} -1\right) - \int_{\{u \leq t\}} e^{\frac{u}2} \abs{\nabla u}^3 \dif\mu \geq t\left(16\pi -\int_{\partial E} \H^2 \dif\sigma\right).
\end{equation}
Moreover, if $\partial E$ has continuous, variational mean curvature $\varphi$, the first variation formula yields $\H = \varphi$.
Thus, with the availability of the weak variational IMCF for $\CS^0$ metrics, and with the characterization discussed in \Cref{prop:characterization}, we can propose the following notion of nonnegative scalar curvature in the sense of IMCF.

\begin{definition}
\label{def:imcf}
Let $M$ be a smooth, noncompact, oriented  $3$-manifold  with $H_2(M) = \{0\}$, and let $g$ be a complete $\CS^0$-Riemannian metric on $M$. Assume that a global Euclidean isoperimetric inequality \cref{eq:isop} holds in $M$. We say that $(M, g)$ has \emph{nonnegative scalar curvature in the sense of IMCF} if, for any bounded $E \subset M$ with connected boundary and continuous variational mean curvature $\varphi$, we have that the solution $u$ to the proper weak variational IMCF from $E$ is of class $W^{1, 3}_{\mathrm{loc}}(M)$  and 
\begin{equation}
\label{eq:nonnegscal-imcf}
    32\pi\left(e^{t/2} -1\right) - \int_{\{u \leq t\}} e^{\frac{u}2} \abs{\nabla u}^3 \dif \mu\geq t\left(16\pi -\int_{\partial E} \varphi^2\dif \sigma \right)
\end{equation}
holds for all $t \geq 0$.
\end{definition}

\subsection{Stability}

Alongside \Cref{prop:characterization}, a fundamental test for the proposed notion of nonnegative scalar curvature is its $\CS^0$ stability.

%\begin{theorem}
%\label{thm:c0-stability}
%   Let $M$ be a smooth, noncompact $3$-manifold  with $H_2(M) = \{0\}$, and let $\{g_j\}_{j \in \mathbb{N}}$ be a sequence of complete $\CS^0$-Riemannian metrics on $M$. Assume that all $(M, g_j)$ satisfy a  global
   %Euclidean   isoperimetric inequality with a constant $C_{\mathrm{iso}}$ independent of $j$ and have nonnegative scalar curvature in the sense of IMCF. Let $g_j \to g$ as $j \to \infty$ in $\CS^0_{\mathrm{loc}}(M)$. Then, $(M, g)$ has nonnegative scalar curvature in the sense of IMCF as well.
%if the $g_j$'s have nonnegative scalar curvature in the IMCF sense, then so does $g$.
%\end{theorem}

\begin{proof}[Sketch of the proof of \Cref{thm:c0-stability}]
   Consider $E\subset M$ a bounded set with connected boundary and continuous variational mean curvature $\varphi$. By slightly modifying $\varphi$ outside $\partial E$, we can assume that $E$ is the unique minimizer of \cref{eq:phi-bubble-def}. Taking inspiration from \cite{gromov-billiards}, one can construct a sequence of bounded sets $E_j\subset M$ with connected boundary and  continuous variational mean curvature $\varphi$  with respect to $g_j$  such that $\{E_j\}_{j\in\mathbb{N}}$ converges to $E$ as finite perimeter sets. 
   %such that for every $j$, there exists $E_j\subset M$ with variational mean curvature $\tilde\varphi$ with respect to $g_j$ satisfying $E_j\to E$ as sets of finite perimeter.One can thus find a modification of  $\varphi$, call it $\tilde\varphi$, such that $\tilde\varphi = \varphi$ on $\partial E$ and there exists $E_j$ with variational mean curvature $\tilde\varphi$ with respect to $g_j$ satisfying $E_j\to E$ as sets of finite perimeter. The availability of a similar approximation is clear from the ideas contained in \cite{gromov-billiards}. 
   The precise construction can be carried out through a perusal of \cite[Lemma 6.2]{huisken_inversemeancurvatureflow_2001} adapted to the varying $\CS^0$ metrics.
%claimed above can however be carried out through a perusal of \cite[Lemma 6.2]{huisken_inversemeancurvatureflow_2001} and an adapation to varying $\CS^0$-metrics. 
Each $E_j$ is obtained by minimizing \cref{eq:phi-bubble-def}  with respect to $g_j$; thanks to the convergence of $g_j$, the sets $E_j$ converge as finite perimeter sets to $E$, the unique minimizer  of \cref{eq:phi-bubble-def}  with respect to $g$. The variational nature of $E_j$ and $E$ also allows us to promote the convergence to Hausdorff convergence, so for $j\in \N$ sufficiently large, $U_1 \Subset E_j \Subset U_2$ and thus they have variational mean curvature \(\varphi\) with respect to \(g_j\). Moreover, the weak$^*$-convergence $\haus^2_j\resmes \partial E_j \to \haus^2\resmes \partial E$ \cite[Proposition 1.80]{ambrosiofuscopallara_BVfunctions_2000} implies that 
   \begin{equation}
\label{eq:convwill}
       \lim_{j \to +\infty}\int_{\partial E_j} \varphi^2 \dif\haus_j^2 = \int_{\partial E} \varphi^2 \dif\haus^2.
   \end{equation}
Observe that the continuity of the prescribed mean curvature $\varphi$ is crucially exploited here.
   Furthermore, one can invoke the regularity theory presented in \cite{regularity_quasiminimizers_ambrosio_paolini}: the Reifenberg-type $\CS^{0,\alpha}$-regularity estimates on $\partial E_j$ are stable under $\CS^0$-ambient metric limits. Therefore, $\partial E_j$ and $\partial E$ are homeomorphic for $j$ large enough, and in particular $\partial E_j$ is connected. 

   For every $j\in\mathbb{N}$, let $u_j$ be the unique proper weak variational solution to the IMCF of $E_j$, which exists thanks to \Cref{thm:imcfbv}. Observe that the proof of this result gives \textit{uniform in $j$} $BV \cap L^\infty$  estimates for $u_j$, hence $u_j \to u$ in $BV \cap L^p$, for any $1 \leq p < +\infty$, where $u$ can be verified to be the unique proper weak variational solution to the IMCF of $E$ in the sense of \Cref{def:variational-IMCF}. Since $(M,g_j)$ has nonnegative scalar curvature in the sense of IMCF, \eqref{eq:nonnegscal-imcf} holds for $u_j$. We stress the fact that \eqref{eq:nonnegscal-imcf} itself yields a uniform bound on $\int_{\{u_j \leq t\}}\abs{\nabla u_j}^3\dif \mu_j$ because of \eqref{eq:convwill}.
   %Since $(M,g_j)$ has nonnegative scalar curvature in the sense of IMCF, \eqref{eq:nonnegscal-imcf} holds for $u_j$. The analysis carried out in the proof of such result actually shows that the $BV \cap L^\infty$  estimates for $u_j$ are uniform in $j$. This implies that $u_j \to u$ in $BV \cap L^p$, for any $1 \leq p < +\infty$, and where $u$ is checked to be the (unique) IMCF of $E$ in the sense of \cref{def:imcf}. Moreover, \eqref{eq:nonnegscal-imcf} itself yields a uniform bound on $\int_{\{u_j \leq t\}}|\nabla u_j|^3\dif \mu$. 
   All these pieces of information combined result in $u \in W^{1,3}_{\loc}(M)$ and in the lower semicontinuity
   \begin{equation}
\label{eq:lschawking}
       \int_{\{u\leq t\}} e^{\frac{u}2}\abs{\nabla u}^3 \dif\mu \leq \liminf_{j \to +\infty} \int_{\{u_j \leq t\}} e^{\frac{u_j}2}\abs{\nabla u_j}^3 \dif\mu_j.
   \end{equation}
The proof is then completed by passing the condition \eqref{eq:nonnegscal-imcf} for $g_j$ to the limit as $j \to \infty$ exploiting \cref{eq:convwill} and \cref{eq:lschawking}.
%Passing to the limit the condition \eqref{eq:nonnegscal-imcf} for $g_j$, as $j \to +\infty$, using \eqref{eq:convwill} and \eqref{eq:lschawking}, completes the proof. 
\end{proof}

A direct corollary of \Cref{prop:characterization} and \Cref{thm:c0-stability} is an IMCF-proof of Gromov's scalar curvature $\CS^0$-stability theorem in dimension $3$, under additional global assumptions.

\begin{corollary}
\label{cor:gromov}
Suppose that the $g_j$'s are smooth metrics satisfying all the assumptions in \Cref{thm:c0-stability}. Furthermore,
suppose that the sequence of $g_j$'s  converges as $j\to \infty$ in $\CS^0_{\mathrm{loc}}(M)$ to a \emph{smooth} metric $g$. Then each $(M,g_j)$ and $(M,g)$
%and $g$ are smooth. Assume, in addition to the assumptions of \cref{thm:c0-stability}, that $g_j$ and $g$ are smooth. Then, they 
have a nonnegative scalar curvature in the classical sense.
\end{corollary}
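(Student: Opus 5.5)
The corollary is obtained by chaining the two main results, so the plan has two steps.

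\emph{Step 1: the $g_j$.} Each $g_j$ is smooth, complete, noncompact, oriented, with $H_2(M)=\{0\}$ and a global Euclidean isoperimetric inequality. Assume $(M,g_j)$ has nonnegative scalar curvature in the sense of IMCF, i.e.\ \Cref{def:imcf} holds. Let $E$ be a bounded set with connected boundary and continuous variational mean curvature $\varphi$. For smooth metrics, the weak variational IMCF of $E$ given by \Cref{thm:imcfbv} is the Huisken--Ilmanen solution, by the remark after \Cref{def:variational-IMCF} and uniqueness. It is therefore locally Lipschitz, so lies in $W^{1,3}_{\loc}$. By the coarea formula, using $\H=\abs{\nabla u}$ on a.e.\ level set and $\H=\varphi$ on $\partial E$, inequality \eqref{eq:nonnegscal-imcf} is equivalent to \eqref{eq:hawkingintegrata}. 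Hence condition (ii) of \Cref{prop:characterization} holds, and the implication (ii)$\Rightarrow$(i) gives $\mathrm{Scal}_{g_j}\ge 0$.

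For the coarea step, write $\int_{\{u\le t\}} e^{u/2}\abs{\nabla u}^3\dif\mu=\int_0^t e^{s/2}\int_{\partial E_s}\H^2\dif\sigma\dif s$. Note that $\{u\le 0\}=E_0$ is where $\nabla u=0$ a.e., so it contributes nothing. Also $\int_0^t 16\pi e^{s/2}\dif s=32\pi(e^{t/2}-1)$.

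\emph{Step 2: the limit $g$.} Applying \Cref{thm:c0-stability}, $(M,g)$ has nonnegative scalar curvature in the sense of IMCF. Before applying \Cref{prop:characterization} to $g$, I must check that $g$ satisfies its hypotheses. Completeness of $g$ is part of the standing assumptions of \Cref{thm:c0-stability}. The isoperimetric inequality for $g$ passes to the limit: for any bounded $F$, the areas and volumes converge under local $\CS^0$ convergence on a compact set containing $F$, so \eqref{eq:isop} holds for $g$ with the same $\kst_{\mathrm{iso}}$. For perimeters one gets lower semicontinuity in the right direction; in fact, for fixed $F$ the $g_j$-perimeter converges to the $g$-perimeter, because the densities of $\Hff^2$ with respect to the normal converge uniformly. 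The converse direction (ii)$\Rightarrow$(i) of \Cref{prop:characterization} then gives $\mathrm{Scal}_g\ge 0$.

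\emph{Main obstacle.} The delicate point is the dictionary between \Cref{def:imcf} and condition (ii) in the smooth case. First, one must identify the weak variational solution with the Huisken--Ilmanen flow, which is deferred to the companion work. Second, the proof of (ii)$\Rightarrow$(i) only uses small geodesic balls, where the Smooth Start Lemma applies; the equivalence of the two formulations is needed only for these sets. I would therefore restrict attention to $E=B_R(p)$ and verify the coarea identity directly along the smooth part of the flow.
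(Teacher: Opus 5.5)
Your proposal is correct and follows the paper's route: the corollary is the chain $(ii)\Rightarrow(i)$ of \Cref{prop:characterization} for each $g_j$, then \Cref{thm:c0-stability}, then $(ii)\Rightarrow(i)$ again for $g$, using the coarea translation between \eqref{eq:nonnegscal-imcf} and \eqref{eq:hawkingintegrata} that the paper states just before \Cref{def:imcf}. Your extra checks are sound refinements: passing the isoperimetric inequality to $g$, and restricting to small geodesic balls so the identification with the Huisken--Ilmanen flow is only needed for smooth data; one small correction is that completeness of $g$ is not literally a hypothesis of \Cref{thm:c0-stability}, but is implicit in its conclusion, since \Cref{def:imcf} is formulated only for complete metrics.
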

\printbibliography
\end{document}